\definecolor{codedarkgreen}{RGB}{51, 133, 4}
\definecolor{codemaroon}{RGB}{133, 5, 63}
\definecolor{codeteal}{RGB}{0, 145, 109}
\definecolor{codepurple}{RGB}{123, 35, 125}
\newcommand{\GRevLex}{\textcolor{codeteal}{\texttt{GRevLex}}}
\newcommand{\GCAlgebra}{\textcolor{codedarkgreen}{\texttt{GCAlgebra }}}
\DeclareMathOperator{\Gr}{Gr}
\lstdefinelanguage{Macaulay2}{
basicstyle=\normalsize\ttfamily,
  alsoletter=",
  classoffset=1,
  keywords={coefficients,toList,transpose,det,factor,netList,subsets,genericMatrix,needsPackage,presentation,generators,gens,selectInSubring,i1,i2,i3,i4,i5,i6,i7,i8,i9,i10,i11,i17,i18,i19,i20,i21,flatten,ideal},
  keywordstyle={\color{blue}},
classoffset=2,
breaklines=true,
morekeywords={"SubalgebraBases","Engine","Brackets"},
keywordstyle={\color{codemaroon}},
classoffset=3,
morekeywords={QQ,CacheTable,Matrix,Eliminate},
keywordstyle={\color{codedarkgreen}},
classoffset=4,
morekeywords={restart,false,true,Weights,Limit,Lex,MonomialOrder,CoefficientRing},
keywordstyle={\color{codeteal}},
classoffset=5,
morekeywords={list,for,in,from,to,of},
keywordstyle={\color{codepurple}},
xleftmargin=1em,
xrightmargin=1em,
columns=fullflexible,
keepspaces=true,
stepnumber=1,
numbers=none,
captionpos=b,
showspaces=false,
frame=none
}
\newcommand{\GC}{\mathcal{G}}
\newcommand{\PP}{\mathbb{P}}
\DeclareMathOperator{\Sh}{Sh}
\theoremstyle{definition}
\newtheorem{definition}{Definition}[section]
\newtheorem{example}[definition]{Example}
\newtheorem{remark}[definition]{Remark}
\theoremstyle{plain}
\newtheorem{theorem}[definition]{Theorem}
\newcommand{\bQ}{{\mathbb{Q}}}
\newcommand{\bP}{\mathbb P}
\title{Brackets and Projective Geometry in Macaulay2}
\author{Dalton Bidleman, Timothy Duff, Jack Kendrick, Michael Zeng}
\begin{document}

\maketitle

\abstract{We introduce the \texttt{Brackets} package for the computer algebra system Macaulay2, which provides convenient syntax for computations involving the classical invariants of the special linear group. We describe our implementation of bracket rings and Grassmann-Cayley algebras, and illustrate basic functionality such as the straightening algorithm on examples from projective and enumerative geometry.}

\section{Introduction}\label{sec:intro}

Classical projective geometry is a source of beautiful results, such as the well-known configuration theorems of Pascal and Desargues, which describe conditions under which points, lines, conics, and other entities in projective space satisfy specified incidence conditions.
For automatic proofs of such theorems and other effective computations, it is convenient to recast geometry in an algebraic language: specifically, the invariant theory of the special linear group.
Generators for the ring of polynomial $\operatorname{SL}_d$-invariants are determinantal polynomials referred to as \emph{brackets}.
The classical straightening algorithm provides a procedure that rewrites arbitrary invariants in terms of these generators.

Incidences between linear subspaces of projective space determine an algebra all their own---the Grassmann-Cayley algebra---which further facillitates automatic theorem proving.
Such proofs follow a typical pattern. First, two geometric conditions are formulated in terms of expressions of the Grassmann-Cayley algebra. Second, these expressions are converted to straightened bracket polynomials. 
Finally, the two conditions are equivalent if and only if the straightened bracket polynomials are identical. 
This basic paradigm has a wealth of applications in computer vision~\cite{faugeras1995geometry,agarwal2025flatland}, robotics~\cite{white1994grassmann,thomas2023new}, and other subjects where geometric configurations play a role.

This article provides a brief overview of the background, functionality, and usage for the first version of the \texttt{Brackets} package in Macaulay2~\cite{M2}.
This first version lays the groundwork for eventual improvements in efficiency, additional functionality (eg.~invariants of binary forms, Cayley factorization), and interfacing with related packages such as \texttt{InvariantRing}~\cite{ferraro2024invariantring} and \texttt{SubalgebraBases}~\cite{burr2024subalgebrabases}.

\section{Background}\label{sec:background}

Our notation follows the exposition in~\cite[Ch.~3]{sturmfels2008invariant}.

\subsection{The Bracket Ring}\label{sub:brackets}

\newcommand{\xx}{X}
\newcommand{\kk}{\textbf{k}}

Fix a ground field $\kk$ and integers $n\ge d \ge 1.$ Let $X = (x_{i j})$ be an $n\times d$ matrix of distinct variables in the polynomial ring $\kk[\xx] := \kk [x_{i j}]
$ over a fixed field $k.$
We think of each row of $X$ as representing a point in the projective space $\PP^{d-1}$ of dimension $(d-1)$ over $k,$ so that $X$ represents a configuration of $n$ points in this projective space.
Many interesting geometric properties of this point configuration can be expressed in terms of the maximal minors of $X,$ which are conveniently written in \emph{bracket notation.}

A bracket $\lambda$ is a formal expression $[\lambda_1 \, \lambda_2 \, \ldots \, \lambda_d]$ where $1\le \lambda_1 < \lambda_2 < \ldots < \lambda_d \le n$ is a size $d$ subset of $\{1,2,\dots, n\}$. It represents the $d\times d$ minor of $X$ with rows indexed by the entries of the bracket.

\begin{example}\label{ex:collinear}
Let $n=4, d=3.$
The $4\times 3$ matrix
\[
X =\left(\begin{smallmatrix}
     x_{1,1}&x_{1,2}&x_{1,3}\\
     x_{2,1}&x_{2,2}&x_{2,3}\\
     x_{3,1}&x_{3,2}&x_{3,3}\\
     x_{4,1}&x_{4,2}&x_{4,3}
     \end{smallmatrix}\right)
\]
represents a configuration of $4$ points $x_1, \dots, x_4$ in the projective plane $\PP^2$, where each row of $X$ corresponds to the projective coordinate $x_i = (x_{i,1}: x_{i,2}: x_{i,3})$.

There are $\binom{4}{3}=4$ brackets, namely $[1 2 3], [1 2 4], [1 3 4], [2 3 4].$ A bracket $[abc]$ vanishes if and only if the points $x_a, x_b$, and $x_c$ are collinear. Thus, the condition that any three of the four points are collinear is expressed by the bracket equation
\[
[1 2 3] [1 2 4] [1 3 4] [2 3 4] = 0.
\]
\end{example}
Let $\Lambda(n,d):=\{[\lambda_1\cdots \lambda_d]\mid 1\leq \lambda_1 <\lambda_2 < \cdots <\lambda_d \leq n\}$ denote the set of brackets for $n$ points in $\PP^{d-1}$. Elements of the free polynomial algebra $\kk[\Lambda(d, n)]$ are known as {\em bracket polynomials}. There is a ring homomorphism that expresses bracket polynomials in terms of the entries of  $X$:
\begin{align}
\psi_{n,d}: \kk[\Lambda(n,d)] &\to \kk[\xx]\\
[\lambda_1 \, \ldots \, \lambda_d] &\mapsto \det \begin{pmatrix}
x_{\lambda_1, 1} & \cdots & x_{\lambda_1, d} \\
\vdots & \ddots  & \vdots \\
x_{\lambda_d, 1} & \cdots & x_{\lambda_d, d}
\end{pmatrix}
\end{align}
Abusing notation, we sometimes identify the bracket $\lambda \in \kk[\Lambda(n,d)]$ with its image $\psi_{n,d}(\lambda).$
Following~\cite{sturmfels2008invariant}, we call the image of $\psi_{n,d}$ the \textit{Bracket ring}, denoted $\mathcal{B}_{n,d}$, and let $I_{n,d}$ denote the kernel of $\psi_{n,d}$.
The $I_{n,d}$ ideal is generated by the well-known Pl\"{u}cker relations, and $\mathcal{B}_{n,d} \cong \kk[\Lambda(n,d)] / I_{n,d}$ is the homogeneous coordinate ring of the Grassmannian $\Gr(d,n)$, the set of $d$-dimensional subspaces of $\kk^n$, when viewed as a projective variety under the Pl\"{u}cker embedding.

\begin{example}
Let $n=4, d=2.$ The matrix
\[
X=\left(\begin{smallmatrix}
     x_{1,1}&x_{1,2}\\
     x_{2,1}&x_{2,2}\\
     x_{3,1}&x_{3,2}\\
     x_{4,1}&x_{4,2}
     \end{smallmatrix}\right)
\]
 represents a configuration of $4$ points on the projective line $\mathbb{P}^1.$
 
There are $\binom{4}{2}=6$ brackets. Unlike in Example \ref{ex:collinear}, the brackets are no longer algebraically independent since they satisfy the quadratic Pl\"{u}cker relation of $\Gr(2,4)$:
\begin{equation}\label{eq:pluecker}
[1 2] [3 4] - [1 3] [2 4] + [1 4] [2 3] = 0.
\end{equation}
\end{example}

In order to compute in $\mathcal{B}_{n,d}$, every bracket polynomial must be expressed by a canonical representative modulo the ideal $I_{n, d}.$ The classical {\em straightening algorithm} rewrites a bracket polynomial in such a way, which turns out to be the
normal form with respect to a certain Gr\"obner basis of the ideal $I_{n,d}.$

The monomial order $\prec$ on $\kk[\Lambda(n,d)]$ used in these Gr\"obner basis computations is known as the {\em tableau order}. A monomial in brackets is a \emph{tableau}, and can be visualized as an array of integers
\[
T = \begin{pmatrix}
  \lambda_1^1 & \ldots & \lambda_1^d \\
  \vdots & & \vdots \\
  \lambda_k^1 & \ldots & \lambda_k^d
  \end{pmatrix}.
\]
A tableau is \textit{standard} if its columns are sorted. An expression in brackets is said to be \emph{straightened} if every tableau appearing in it is standard. We order the set of brackets $\Lambda(n,d)$ lexicographically 
\begin{equation}
\lambda \prec \mu \text{ if for the smallest } i \text{ such that } \lambda_i\neq \mu_i, \text{ we have } \lambda_i < \mu_i
\end{equation} 
This ordering of variables specifies \GRevLex\ monomial order $\prec$ on $\kk[\Lambda(n,d)]$, the \emph{tableaux order}. Note that the standard tableaux form a vector space basis for $\mathcal{B}_{n,d}$. 

The first fundamental theorem of invariant theory \cite[Theorem 3.2.1]{sturmfels2008invariant} states that, for $\kk=\mathbb{C}$, the bracket ring $\mathcal{B}_{n,d}$ is the ring of polynomial invariants for the action of the special linear group $\operatorname{SL} (\mathbb{C}^d)$ by right-multiplication of $X.$ This already suggests a connection between $\mathcal{B}_{n,d}$ and the geometry of projective point configurations. The Grasssmann-Cayley algebra of the next section is an important tool for investigating these connections.

\subsection{Grassmann-Cayley Algebra}\label{sec:gc-algebra}

Traditionally (see eg.~\cite[\S 3.3]{sturmfels2008invariant}), the term Grassmann-Cayley algebra refers to the usual exterior algebra of a vector space $V$, endowed with an extra operation that represents the meet of two subspaces.
Traditionally, the symbol $\wedge $ is reserved for this meet operation. 
The usual wedge product, representing the join of two subspaces, may be denoted either by $\vee $ or simply $\cdot $ in this context.
We point out that the exterior algebra can be given the additional structure of a Hopf algebra, in which the meet operation serves as a comultiplication and the antipode map sends a vector in $V$ to its additive inverse.

For the purpose of automatically proving geometric incidence theorems, it is too restrictive to work with fixed vectors in a vector space $V$.
One would instead like to use $n$ formal variables to represent an \emph{arbitrary} configuration of $n$ points in the projective space $\PP (V).$
We now explain how this can be done.

The exterior algebra on a $n$-dimensional vector space may be realized as polynomial ring in $n$ skew-commuting variables over a ring $R.$
Following~\cite{stillman-exterior}, we denote this ring by $R \langle e_1, \ldots , e_n \rangle $.
The usual product in this ring corresponds to the join operation, in agreement with the Grassmann-Cayley notation conventions.
In our setting, the coefficient ring $R=\mathcal{B}_{n,d}$ will be the bracket ring of the previous section. 
Gr\"{o}bner basis computations such as normal forms in the ring $ \langle e_1, \ldots , e_n \rangle $ by a simple adaptation of Buchberger's algorithm.
We define the \emph{Grassmann-Cayley ring} for configurations of $n$ points in $\PP^{d-1}$,
\begin{equation}\label{eq:gc-ring-new}
\GC_{d} (e_1, \ldots , e_n) = \mathcal{B}_{n,d} \langle e_1, \ldots , e_n \rangle / 
J_{n,d},
\end{equation}
where $J_{n,d}$ is the (two-sided) ideal generated by all squarefree monomials $e_{i_0} \cdots e_{i_d}$ of degree $d+1.$
In this setting, the join operation in $\GC_{d} (e_1, \ldots , e_n)$ is inherited naturally from polynomial multiplication in $\mathcal{B}_{n,d} \langle e_1,\ldots , e_n \rangle .$ 
Monomials in $\mathcal{B}_{n,d} \langle e_1,\ldots , e_n \rangle $ are known as \emph{blades}, and we refer to their residue classes in
$\GC_{d} (e_1, \ldots , e_n)$ as \emph{extensors}.
Thus,
$\GC (e_1,\ldots, e_n)$ is a noncommutative algebra over $\mathcal{B}_{n,d}$ whose nonzero graded pieces $\GC_d (e_1,\ldots, e_n)^{(k)}$, $0\le k \le d,$ are spanned by the degree-$k$ extensors.

Extensors of degree $d$ correspond to brackets in a natural way:
\begin{align*}
\GC_d (e_1,\ldots , e_n)^{(d)} \ni e_{i_1} \cdots e_{i_d} \leftrightarrow [i_1 \, \ldots \, i_d] \in \Lambda (n,d).
\end{align*}
To prevent accumulation of indices when defining the meet operation, we may also write the bracket corresponding to an extensor as $[e_{i_1} \, \ldots \, e_{i_d}].$
Extending by distributivity, it will be sufficient to define the meet on extensors.
For extensors $a=a_1 a_2 \cdots a_j$ and $b=b_1 b_2 \cdots b_k$ where $j+k \geq d$, we define their meet $a\wedge b$ as in~\cite{sturmfels2008invariant} using the ``shuffle product",
\begin{equation}\label{eq:shuffle-product}
a \wedge b=\sum_{w\in \Sh_{j,k,d} }\operatorname{sgn}(w)\left[a_{w_1}\,  \ldots \, a_{w_{d-k}}\,  b_1 \, \ldots \,  b_k\right] a_{w_{d-k+1}} \cdots a_{w_j}
\in \GC_d^{(j+k-d)} (e_1, \ldots , e_n),
\end{equation}
where the sum in~\eqref{eq:shuffle-product} is taken over the set of \emph{shuffle permutations} written in one-line notation,
\begin{equation}\label{eq:shuffle-set}
\Sh_{j,k,d} = 
\left\{ w = (w_1, \ldots , w_j) \in \mathcal{S}_j \mid w_1 < w_2 < \ldots < w_{d-k}, \, \, w_{d-k+1} < w_{d-k+2} < \ldots < w_j \right\} .
\end{equation}
Just as the join of two extensors corresponds to taking the (projective) span of two linear spaces, the meet $a \wedge b$ of two extensors is corresponds to taking the intersection of subspaces.
We refer to~\cite[Thm 3.2.2]{sturmfels2008invariant} for a justification of this and other facts in the setting of classical Grassmann-Cayley algebras, such as the following anti-commutativity relations: if $a$ and $b$ are extensors of ranks $j$ and $k$, then $a\cdot b = (-1)^{jk}b\cdot a$, and $a\wedge b = (-1)^{(d-j)(d-k)}b \wedge a$.
We point out one slight difference between the classical setup and ours: as shown in~\Cref{ex:first-gc} the shuffle product if two extensors is generally not an extensor according to the definition above.
Nevertheless, for any choice of vectors $v_1, \ldots , v_n $ in a $d$-dimensional vector space $V,$ evaluating the expression~\eqref{eq:shuffle-product} at $e_i=v_i$ yields the classical shuffle product.

We note that, in the particular case where $j+k=d,$ the shuffle product of degree $j$ and $k$ extensors in equation~\eqref{eq:shuffle-product} produces an element of the bracket ring $\mathcal{B}_{n,d}.$

\begin{example}\label{ex:first-bracket}
Points in projective space correspond to $1$-extensors. We formulate the condition that a point $e_1$ lies on a line $\overline{e_2 e_3}$ using the Grassman-Cayley algebra. 
The line $\overline{e_2 e_3}$ is represented by the extensor $e_2\cdot e_3$, and the condition that $e_1$ lies on $\overline{e_2 e_3}$ (i.e. $e_1$ meets $\overline{e_2 e_3}$) corresponds to the vanishing of the bracket $$e_1\wedge e_2 e_3 = [e_1 \, e_2 \, e_3 ] = 0.$$
\end{example}

\newcommand{\Brackets}{\texttt{Brackets} }

In~\Cref{sec:Examples}, we use the \Brackets package to further illustrate how incidence theorems in synthetic projective geometry can be derived utilizing the formalism of Grassmann-Cayley algebras.

\begin{remark}
The field $\kk$ may be replaced with a more general coefficient ring when defining the rings $\mathcal{B}_{n,d}$ and $\GC_d (e_1, \ldots , e_n)$ and the shuffle product.
In such cases, the option \texttt{CoefficientRing} can be supplied to the ring constructors described below. See~\Cref{sub:transversals} for a specific example.
\end{remark}


\section{Data Types and Basic Usage}\label{sec:functionality}

Formation of the bracket rings $\mathcal{B}_{n,d}$ and the Grassman-Cayley rings $\GC_d (e_1, \ldots , e_n)$ defined in~\eqref{eq:gc-ring-new}, as well as implementing the straightening algorithm on $\mathcal{B}_{n,d}$ and the shuffle product~\eqref{eq:shuffle-product} on $\GC_d (e_1, \ldots , e_n)$, are all straightforward tasks that can be accomplished with the core functionality of Macaulay2.
The \Brackets package provides dedicated datatypes and special syntax for manipulating GC expressions, with a view towards maintaining the notational elegance of the Grassmann-Cayley formalism.

Our package provides three main datatypes: 
\begin{enumerate}
    \item \texttt{BracketRing}, for representing the rings $\mathcal{B}_{n,d}$,
    \item \texttt{GCRing}, for representing the rings $\GC_d (e_1, \ldots , e_n)$, and
    \item \texttt{GCExpression}, representing elements of the ring $\GC_d (e_1, \ldots , e_n)$.
\end{enumerate}
These objects are implemented as hash tables pointing to instances of the standard \texttt{Ring} or \texttt{RingElement} types.
Both \texttt{BracketRing} and \texttt{GCRing} inherit from a parent class \texttt{AbstractGCRing}.
Objects of class \texttt{GCExpression} may be obtained by conversion from corresponding \texttt{RingElement} instances; once these expressions are created, new expressions can be formed via various operations.
To handle these conversions gracefully, we define a new method \texttt{(\_, RingElement, AbstractGCRing)} for Macaulay2's native subscript operator.
This subscript operator is essential for performing bracket and GC ring computations.

\begin{example}\label{ex:first-gc}
(\cite[Example 3.1.10]{sturmfels2008invariant})
Let us form the bracket $[1\, 4 \, 5] \in \mathcal{B}_{6,3}$:
\begin{lstlisting}[language=macaulay2]
i1 : needsPackage "Brackets";
i2 : B = bracketRing(6, 3)
o2 = B
      6,3
o2 : BracketRing
i3 : [1 4 5]_B
o3 = [145]
o3 : Bracket
\end{lstlisting}
The constructor \texttt{bracketRing} offers users the option of inputting their own symbols to replace the integers $1, \ldots , n$ appearing inside of brackets.
New GC expressions in $\mathcal{B}_{n,d}$ can be formed through addition, multiplication, and scalar multiplication as with usual polynomials.
The following example illustrates the straightening algorithm applied to a bracket monomial.
\begin{lstlisting}[language=macaulay2]
i4 : T = [1 4 5]_B * [1 5 6]_B * [2 3 4]_B
o4 = [234]*[156]*[145]
o4 : GCExpression
i5 : normalForm T 
o5 = [256]*[145]*[134]-[356]*[145]*[124]+[456]*[145]*[123]
o5 : GCExpression
\end{lstlisting}
\end{example}

\begin{example} (\cite[Ex 3.3.3]{sturmfels2008invariant})
To illustrate interactions between GC rings and their associated brackets, we study the GC expression representing the intersection of three lines $\overline{ad}, \overline{be}, \overline{cf} \in \PP^2$:
\begin{equation}\label{eq:gc-3lines}
ad \wedge be \wedge cf \in \GC_3 (a, \ldots , f).
\end{equation}
First, we form the underlying GC ring,
\begin{lstlisting}[language=macaulay2]
i2 : G = gc(a..f, 3)
o2 = Grassmann-Cayley Algebra generated by 1-extensors a..f
\end{lstlisting}
The variables $a, \ldots , f$ are interpreted by Macaulay2 to be traditional ring elements. 
To work with the corresponding GC expression, we must again utilize the subscript operator:
\begin{lstlisting}[language=Macaulay2]
i3 : instance(a, GCExpression)
o3 = false
i4 : instance(a_G, GCExpression)
o4 = true
\end{lstlisting}
To form more complex expressions, addition and multiplication of either GC expressions or their underlying ring elements can be used, as shown below.
\begin{lstlisting}[language=Macaulay2]
i5 : a_G * b_G
o5 = a*b
o5 : GCExpression
i6 : (a*b)_G
o6 = a*b
o6 : GCExpression
\end{lstlisting}
Although we do not provide an equality operator for \texttt{GCExpression}, we can check an important special case; two bracket polynomials are equal in $\mathcal{B}_{n,d}$ if and only if their difference straightens to zero.

The shuffle product $\wedge $ for a pair of GC expressions is implemented using the operator \texttt{\^}.
We illustrate first the shuffle product of two degree-$2$ extensors:
\begin{lstlisting}[language=Macaulay2]
i7 : A = (a * d)_G;
i8 : B = (b * e)_G;
i9 : AB = A ^ B 
o9 = [bde]*a+[abe]*d
o9 : GCExpression
\end{lstlisting}
In the classical Grassman-Cayley algebra, the GC expression $ad \wedge be$ would be a degree-$1$ extensor, representing the unique point in the intersection $\overline{ad} \cap \overline{be} \subset \PP^2$ as long as $\overline{ad} \ne \overline{be}.$
Here, we obtain a $\mathcal{B}_{6,3}$-linear combination of the 1-extensors spanning $\overline{ad}$, which agrees with the classical result upon specialization.
The weights in this combination may be determined using Cramer's rule.

Finally, we obtain a bracket formula for the GC expression~\eqref{eq:gc-3lines}:
\begin{lstlisting}[language=Macaulay2]
i9 : C = (c * f)_G
o6 = c*f
o6 : GCExpression
i7 : D = AB ^ C
o7 = 2*[bde]*[acf]-2*[cdf]*[abe]
o7 : GCExpression
\end{lstlisting}
This bracket polynomial vanishes if and only if the three lines intersect, ie.~$\overline{ad} \cap \overline{be} \cap \overline{cf} \ne \emptyset .$
\end{example}

In addition to the various ring-like operations involving GC expressions, the \Brackets package also allows users to work with polynomials in the entries of a $n\times d$ matrix $X,$ which may be (partially) rewritten in terms of bracket polynomials so as to test membership in $\mathcal{B}_{n,d}$, via methods like \texttt{toBracketPolynomial}, and \texttt{normalForm} for the straightening algorithm,
We refer the following examples and the package's documentation for sample usage.

\section{Further Examples}\label{sec:Examples}
\subsection{Desargues' Theorem}\label{sub:Desargues}

In this section, we use \Brackets\ to derive Desargues' classical theorem on perspective triangles. 

Fix six distinct points $a, b, c, d, e, f$ in $\PP^2$ and consider the two triangles $\triangle abc$ and $\triangle def.$ The two triangles are said to be perspective from a point if the three straight lines connecting corresponding vertices in each triangle all intersect at a single point. On the other hand, the triangles are perspective from a line if the intersection points of each pair of corresponding sides are collinear.

\begin{figure}[h]
    \centering
\begin{tikzpicture}[x=0.8pt,y=0.8pt,yscale=-1,xscale=1, line width=0.8]

\draw[gray,dashed]   (140,160) -- (230,250) ; 
\draw[gray,dashed]   (140,160) -- (140,310) ; 
\draw[gray,dashed]   (140,160) -- (50,250) ; 

\draw[gray] (50,250) -- (290,250); 
\draw[gray] (50,250) -- (230,370) ; 
\draw[gray] (140,310) -- (320,190) ; 
\draw[gray,dashed] (330,170) -- (220,390) node[pos=0.5, right]{\color{black}\large$\ell$};
\draw[gray] (110,190) -- (290,250) ;
\draw[gray] (110,190) -- (230,370) ;
\draw[gray] (320,190) -- (140,235) ;

\filldraw[black] (290,250) circle (2pt); 
\filldraw[black] (230,370) circle (2pt); 
\filldraw[black] (320,190) circle (2pt); 

\filldraw[black] (230,250) circle (2pt); 
\filldraw[black] (140,310) circle (2pt); 
\filldraw[black] (50,250) circle (2pt);

\filldraw[black] (140, 235) circle (2pt); 
\filldraw[black] (110,190) circle (2pt); 
\filldraw[black] (200,220) circle (2pt); 

\filldraw[black] (140,160) circle (2pt) node[anchor=south]{$O$}; 

\draw[fill = {rgb, 255:red, 74; green, 144; blue, 226 }, thick, fill opacity=0.65, text opacity=1] (230, 250) node[anchor=north]{\large $d$}
-- (140, 310) node[anchor=north]{\large $e$}
-- (50, 250) node[anchor=east]{\large $f$}
-- cycle;

\draw[fill = {rgb, 255:red, 208; green, 2; blue, 27 }, thick, fill opacity=0.65, text opacity=1] (110, 190) node[anchor=east]{\large $c$}
-- (140, 235) node[anchor=east]{\large $b$}
-- (200, 220) node[anchor=south]{\large $a$}
-- cycle;

\end{tikzpicture}

    \caption{Two perspective triangles $\triangle abc$ and $\triangle def$. The lines connecting corresponding vertices of each triangle all intersect at the point $O$, whereas pairs of corresponding sides all intersect at points on the line $\ell.$}
    \label{fig:desargues}
\end{figure}
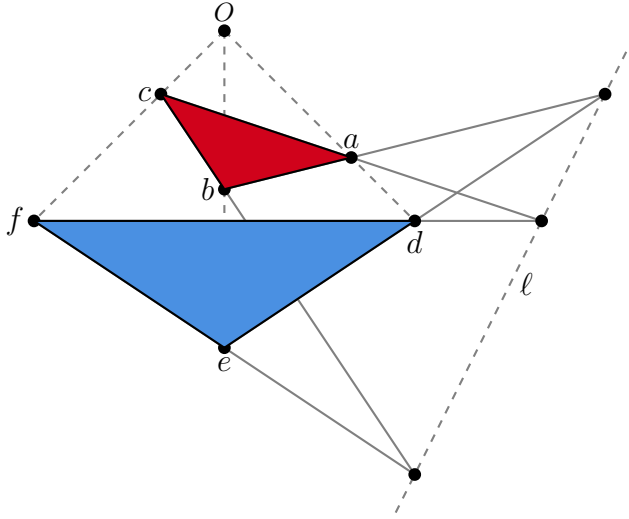

Figure~\ref{fig:desargues} depicts a pair of triangles that are perspective from both a point and a line. While the two different notions of perspectivity have different definitions, Desargues' theorem states that these two conditions are in fact equivalent.

\begin{theorem}[Desargues]
    Two triangles $\triangle abc$ and $\triangle def$ of points in $\PP^2$ are perspective from a point if and only if they are perspective from a line.
\end{theorem}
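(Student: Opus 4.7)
The plan is to express each perspectivity condition as the vanishing of a bracket polynomial in $\mathcal{B}_{6,3}$, compute both using the shuffle-product formula~\eqref{eq:shuffle-product}, and verify via the straightening algorithm that the two polynomials agree up to a factor supported on the locus where $\triangle abc$ or $\triangle def$ degenerates. Since being a genuine triangle requires the three vertices to be non-collinear, this will suffice to prove the equivalence.

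First I would translate the two conditions into GC language. Perspective from a point is the statement that $\overline{ad}$, $\overline{be}$, and $\overline{cf}$ concur, which is the vanishing of
\[
P := ad \wedge be \wedge cf \in \mathcal{B}_{6,3},
\]
and was already computed in the paper's running example to be $P = 2[bde][acf] - 2[cdf][abe]$. Perspective from a line is the statement that the three intersection points of corresponding sides, namely
\[
p_1 := ab \wedge de, \quad p_2 := bc \wedge ef, \quad p_3 := ac \wedge df,
\]
are collinear. Each $p_i$ is a $1$-extensor given as a two-term $\mathcal{B}_{6,3}$-linear combination of the $e_j$ via~\eqref{eq:shuffle-product}, so their join $L := p_1 \cdot p_2 \cdot p_3$ is a $3$-extensor, which under the correspondence between degree-$d$ extensors and brackets is an element of $\mathcal{B}_{6,3}$; collinearity is $L = 0$.

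Next I would expand $L$ by multiplying out the three two-term $p_i$'s: a priori this gives eight terms, most of which vanish because any monomial $e_{i_1}e_{i_2}e_{i_3}$ with a repeated index lies in $J_{6,3}$. Applying \texttt{normalForm} to both $P$ and $L$ should yield straightened expressions which exhibit an identity of the form
\[
L \;=\; c\cdot [abc]\,[def]\cdot P
\]
for some nonzero scalar $c$. Under the nondegeneracy hypothesis $[abc] \ne 0 \ne [def]$ implicit in the word ``triangle,'' this identity gives $L = 0 \iff P = 0$, which is exactly the claimed equivalence.

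The main obstacle is the symbolic expansion and reduction of $L$: one must track signs through the shuffle permutations for each of the three meets, collect the surviving terms modulo $J_{6,3}$, and then recognize the product $[abc][def]$ factoring out of the straightened result by invoking several Pl\"ucker relations. Doing this by hand is unpleasant and error-prone, but this is precisely the kind of bookkeeping the \Brackets package is designed to automate, reducing the entire verification to a short \texttt{normalForm} computation.
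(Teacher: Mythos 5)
Your proposal follows essentially the same route as the paper: both encode perspectivity from a point as the meet $ad\wedge be\wedge cf$ and perspectivity from a line as the join of the three meets of corresponding sides, straighten both bracket polynomials with \texttt{normalForm}, and observe the identity relating them by the factor $[abc][def]$ (the paper's computation gives exactly $2\,L = [abc][def]\,P$ after straightening), so that non-degeneracy of the triangles yields the equivalence. Your outline is correct and matches the paper's proof in structure and substance.
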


\begin{proof}
We provide a simple proof of this theorem using the basic functionality of \Brackets. First, we initialize the Grassmann-Cayley algebra for the six points $a,b,c,d,e,f$ in $\PP^2.$

\begin{lstlisting}[language=Macaulay2]
needsPackage "Brackets"
G = gc(a..f,3) -- Grassmann-Cayley algebra for 6 points in P^2
\end{lstlisting}
The lines spanned by each pair of vertices form the sides of each triangle. Note that the command \texttt{\_G} ensures that each line is considered as an element of the Grassmann-Cayley algebra.
\begin{lstlisting}[language=Macaulay2]
abLine = (a * b)_G -- line spanned by a and b
bcLine = (b * c)_G -- line spanned by b and c
acLine = (a * c)_G -- line spanned by a and c
deLine = (d * e)_G -- line spanned by d and e
efLine = (e * f)_G -- line spanned by e and f
dfLine = (d * f)_G -- line spanned by d and f
\end{lstlisting}

Without loss of generality, we set that vertex $a$ corresponds to vertex $d,$ vertex $b$ to vertex $e,$ and vertex $c$ to vertex $f.$ We now form the condition that the two triangles in perspective from a line. For a given pair of corresponding sides, the intersection point is generated by the meet operation, $\wedge.$ The expression \texttt{linePerspective} is the join of the three intersection points and is equal to zero if and only if the points are collinear. Thus, the two triangles $\triangle abc$ and $\triangle def$ are perspective from a line if and only if \texttt{linePerspective} vanishes.
\begin{lstlisting}[language=Macaulay2]
pt1 = abLine ^ deLine -- intersection of ab and de
pt2 = bcLine ^ efLine -- intersection of bc and ef
pt3 = acLine ^ dfLine -- intersection of ac and df
linePerspective = pt1 * pt2 * pt3 -- Condition that the pts p1, p2, p3 are collinear
\end{lstlisting}

Next, we form the condition that the triangles are perspective from a point. We first form the lines spanned by corresponding pairs of vertices. The expression \texttt{pointPerspective} is the meet of these three lines and vanishes if and only if the three lines intersect at a single point. It follows that the triangles $\triangle abc$ and $\triangle def$ are perspective from a point if and only if \texttt{pointPerspective} vanishes.
\begin{lstlisting}[language=Macaulay2]
adLine = (a * d)_G -- line spanned by a and d
beLine = (b * e)_G -- line spanned by b and e
cfLine = (c * f)_G -- line spanned by c and f
pointPerspective =  adLine ^ beLine ^ cfLine
\end{lstlisting}
The expressions \texttt{linePerspective} and \texttt{pointPersective} are two degree zero elements of the Grassmann-Cayley algebra, which we identify with elements of the bracket ring $\mathcal{B}_{6, 3}.$ The representatives of these elements in the bracket ring do not share any common factors:
\begin{lstlisting}[language=Macaulay2]
i17 : factor linePerspective 
o17 : {[abc], [cef]*[bde]*[adf]-[cdf]*[bef]*[ade]}
i18 : factor pointPerspective 
o18 : {[bde]*[acf]-[cdf]*[abe], 2}
\end{lstlisting}
However, applying the straightening algorithm to each expression via the \texttt{normalForm} command we can rewrite the expressions in normal form. 
\begin{lstlisting}[language=Macaulay2]
i19 : nl = normalForm linePerspective 
o19 : [def]*[bdf]*[ace]*[abc]-[def]*[bef]*[acd]*[abc]-[def]*[cdf]*[abe]*[abc]-[def]^2*[abc]^2
i20 : np = normalForm pointPerspective 
o20 : 2*[bdf]*[ace]-2*[bef]*[acd]-2*[cdf]*[abe]-2*[def]*[abc]
\end{lstlisting}
Thus, by applying the straightening algorithm to \texttt{pointPerspective} and \texttt{linePerspective} we have
\begin{lstlisting}[language=Macaulay2]
i21 : [abc] * [def] * nl - 2 * nl
o21 : 0
\end{lstlisting}
Thus, the two triangles are perspective from a point if and only if either one of the triangles $\triangle abc, \triangle def$ are degenerate or the triangles are perspective from a line, concluding our proof of Desargues' theorem.
\end{proof}

\subsection{Transversals of Four Lines in Space}\label{sub:transversals}

A \textit{transversal} to a number of given lines $\ell_1, \ell_2, \dots, \ell_n$ in $\PP^3$ is a line which intersects all $\ell_i$ non-trivially. Counting the transversals of four skew lines in $\PP^3$ is a famous problem in classical projective geometry. 
We use the \Brackets package to rederive this classical result. Through this example, we also demonstrate how to work with extra formal parameters in the coefficient ring.

\begin{theorem}
    There are two transversals to four general lines in $\bP^3$. 
\end{theorem}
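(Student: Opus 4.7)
The plan is to reduce the problem to two bilinear bracket equations via a standard parametrization of lines meeting two fixed lines, and then carry out the elimination with \Brackets. Working in $\GC_4(a_1, b_1, \ldots, a_4, b_4)$ with the coefficient ring of $\mathcal{B}_{8,4}$ extended by $\QQ[s,t]$ (via the \texttt{CoefficientRing} option), represent each line as a $2$-extensor $\ell_i = a_i \cdot b_i$, and introduce generic points
\[
p = a_1 + s\, b_1 \in \ell_1, \qquad q = a_2 + t\, b_2 \in \ell_2 .
\]
The candidate transversal $L = p \cdot q$ meets $\ell_1$ and $\ell_2$ by construction; the remaining two transversality conditions become the scalar bracket equations
\[
f_i(s,t) \;:=\; L \wedge \ell_i \;=\; [\,p \; q \; a_i \; b_i\,] \;=\; 0, \qquad i = 3, 4,
\]
where the displayed meet is a rank-$0$ extensor by the $j=k=2$, $d=4$ case of the shuffle product. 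By multilinearity of the $4 \times 4$ determinant in the rows $p$ and $q$, each $f_i$ is bilinear in $(s,t)$.

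The second step is to eliminate $t$. Writing $f_i(s,t) = \alpha_i(s) + t\,\beta_i(s)$ with $\alpha_i, \beta_i$ affine-linear in $s$ and with coefficients in $\mathcal{B}_{8,4}$, the resultant
\[
\rho(s) \;:=\; \alpha_3(s)\, \beta_4(s) \;-\; \alpha_4(s)\, \beta_3(s) \;\in\; \mathcal{B}_{8,4}[s]
\]
is a polynomial of degree at most $2$ in $s$. Every transversal corresponds to a root of $\rho$ together with the value of $t$ read off from $f_3 = 0$, and conversely. For four general lines, $\rho$ is a genuine quadratic with distinct roots, giving exactly two transversals. This also matches the Bezout count of two $(1,1)$-divisors on $\PP^1 \times \PP^1$. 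Concretely, one forms $\GC_4$ with \texttt{CoefficientRing => QQ[s,t]}, builds $f_3$ and $f_4$ as GC expressions, applies \texttt{normalForm}, and reads off the bilinear coefficients of $s^i t^j$ to assemble $\rho$.

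The main obstacle is making ``general'' precise and verifying it. One must check that both the leading $s^2$-coefficient of $\rho$ and its discriminant are nonvanishing on the configuration space of four skew lines in $\PP^3$. Rather than analyzing these non-vanishing conditions abstractly inside the eight-point bracket ring $\mathcal{B}_{8,4}$, it suffices to exhibit one configuration of four skew lines for which $\rho$ is a genuine quadratic with two distinct roots---for instance, three mutually skew lines together with a fourth line meeting in two distinct points the unique smooth quadric ruled by transversals of the first three. Such a sanity check can itself be performed inside \Macaulay using the \Brackets\ package, at which point the proof is complete.
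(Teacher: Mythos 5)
Your argument is correct (at the same level of rigor as the paper's, i.e.\ modulo explicitly stated genericity), but it takes a genuinely different route. The paper parametrizes only the point $p=\lambda a+\mu b$ on $\ell_1$ and then uses the Grassmann--Cayley operations themselves to build the transversal: $V=p\cdot\ell_2$, $q=V\wedge\ell_3$, $\ell=q\cdot p$, so that the single condition $\ell\cdot\ell_4=0$ is one homogeneous quadratic in $(\lambda:\mu)$ whose two roots are the two transversals; the discriminant of that quadratic is then exhibited as an explicit bracket polynomial describing the degenerate configurations. You instead parametrize points on both $\ell_1$ and $\ell_2$, obtain two bidegree-$(1,1)$ conditions $f_3=f_4=0$, and count solutions by eliminating $t$ (a $2\times 2$ resultant) or by B\'ezout on $\PP^1\times\PP^1$. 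What the paper's construction buys is that the count reduces to the roots of one quadratic, with no elimination step and with the degeneracy locus appearing directly as a bracket discriminant; what your approach buys is a more standard intersection-theoretic framing ($(1,1)\cdot(1,1)=2$) at the cost of having to police the resultant step: you need $\beta_3(s_0)\neq 0$ to recover $t$ from $f_3$, you must rule out a common factor of $f_3,f_4$, and your affine charts $p=a_1+s\,b_1$, $q=a_2+t\,b_2$ silently exclude the points $b_1,b_2$ (the paper's homogeneous pair $(\lambda:\mu)$ avoids this), all of which you correctly sweep into ``general'' and could certify, as you propose, by one explicit configuration making the leading coefficient and discriminant of $\rho$ --- which are bracket polynomials in the configuration --- nonvanishing, hence generically nonvanishing.
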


Our strategy is to represent each of the four lines as the join of two points in the Grassmann-Cayley algebra and then express the transversal as an incidence relation. Fixing one of the lines as $$\ell_1 = a\cdot b$$ for $a\neq b\in \bP^3$, then the common transversal meets $\ell_1$ at some point $p\in \ell_1$, which has the form $$p = \lambda a + \mu b, \quad \lambda + \mu = 1.$$
This prompts the introduction of scalar parameters $\lambda$ and $\mu$ in the coefficient ring of the Grassmann-Cayley algebra. We initialize the Grassmann-Cayley algebra for eight general points in $\PP^3$:

 \begin{lstlisting}[language=Macaulay2]
needsPackage "Brackets"
G = gc(toList(a..h), 4, CoefficientRing => QQ[l,m])
 \end{lstlisting}
The argument $4 = 3+1$ sets the ambient space as $\bP^3$, and the coefficient ring is set to be $\bQ[\lambda, \mu]$, by slight abuse of notation. Next, we create each of the four lines as a join of two points:

\begin{lstlisting}[language = Macaulay2]
ell1 = (a*b)_G -- line spanned by a and b
ell2 = (c*d)_G -- line spanned by c and d
ell3 = (e*f)_G -- line spanned by e and f
ell4 = (g*h)_G -- line spanned by g and h
\end{lstlisting}

The command \textcolor{codedarkgreen}{\texttt{\_G}} ensures that the expressions are considered as honest elements of the \GCAlgebra.  A transversal $\ell$ will meet $\ell_1$ at some point $p = \lambda a + \mu b$
 as previously discussed. 
\begin{lstlisting}[language = Macaulay2]
p = (l*a + m * b)_G
\end{lstlisting}
Since the lines are general, there exist values $\lambda, \mu$ such that $p$ does not lie on $\ell_2$, so the join $V = p\cdot \ell_2$ is the blue plane in \Cref{fig:transversals}. Since the transversal $\ell$ also intersects $\ell_2$, the plane $V$ contains both $p$ and the intersection point $\ell\wedge \ell_2.$ It follows that $\ell$ is contained in $V$. 

Now, the plane $V$ meets the general line $\ell_3\subset \bP^3$ in exactly one point $q = V\wedge\ell_3$. Again, since the transversal $\ell$ intersects $\ell_3$ and $\ell$ is contained in $V,$ the point $q$ has to be the intersection point $\ell\wedge \ell_3.$ Thus, $q$ lies on $\ell$ and the transversal is the join of $q$ and $p$, $$q\cdot p = ((p\cdot \ell_2)\wedge \ell_3)\cdot p.$$
The \Brackets expression for the transversal $\ell$ is 
\begin{lstlisting}[language = Macaulay2]
ell = ((p * ell2) ^ ell3) * p
\end{lstlisting}

\begin{figure}
    \tikzset{every picture/.style={line width=0.75pt}} 

\begin{tikzpicture}[x=0.75pt,y=0.75pt,yscale=-1,xscale=1]

\draw  [color={rgb, 255:red, 74; green, 144; blue, 226 }  ,draw opacity=1 ][fill={rgb, 255:red, 74; green, 144; blue, 226 }  ,fill opacity=0.3 ][dash pattern={on 0.84pt off 2.51pt}] (276.14,69.36) -- (479,145.83) -- (293.14,216.36) -- (207,93.83) -- cycle ;
\draw [line width=0.75]    (242,21.83) -- (401,249.83) ;
\draw [line width=0.75]    (207,93.83) -- (297,110.83) ;
\draw [line width=0.75]    (315,113.83) -- (479,145.83) ;
\draw [line width=0.75]    (205,161.83) -- (313,136.83) ;
\draw [line width=0.75]    (380,120.83) -- (488,95.83) ;
\draw [line width=0.75]    (329,133.33) -- (357,126.83) ;
\draw [line width=0.75]    (254,246.83) -- (296.67,213.48) -- (341,178.83) ;
\draw [line width=0.75]    (354,168.83) -- (394,137.83) ;
\draw [line width=0.75]    (411,125.83) -- (426,113.83) ;
\draw [line width=0.75]    (441,101.83) -- (507,53.83) ;
\draw [color={rgb, 255:red, 208; green, 2; blue, 27 }  ,draw opacity=1 ][line width=0.75]    (269,17.33) -- (300,265.83) ;
\draw  [line width=3.75] [line join = round][line cap = round] (252.14,37.36) .. controls (252.14,37.36) and (252.14,37.36) .. (252.14,37.36) ;
\draw  [line width=3.75] [line join = round][line cap = round] (383.14,225.36) .. controls (383.14,225.36) and (383.14,225.36) .. (383.14,225.36) ;
\draw  [color={rgb, 255:red, 208; green, 2; blue, 27 }  ,draw opacity=1 ][line width=3.75] [line join = round][line cap = round] (275.14,69.36) .. controls (275.14,69.36) and (275.14,69.36) .. (275.14,69.36) ;
\draw  [color={rgb, 255:red, 208; green, 2; blue, 27 }  ,draw opacity=1 ][line width=3.75] [line join = round][line cap = round] (280.14,107.36) .. controls (280.14,107.36) and (280.14,107.36) .. (280.14,107.36) ;
\draw  [color={rgb, 255:red, 208; green, 2; blue, 27 }  ,draw opacity=1 ][line width=3.75] [line join = round][line cap = round] (285.14,143.36) .. controls (285.14,143.36) and (285.14,143.36) .. (285.14,143.36) ;
\draw  [color={rgb, 255:red, 208; green, 2; blue, 27 }  ,draw opacity=1 ][line width=3.75] [line join = round][line cap = round] (294.14,215.36) .. controls (294.14,215.36) and (294.14,215.36) .. (294.14,215.36) ;
\draw [color={rgb, 255:red, 208; green, 2; blue, 27 }  ,draw opacity=1 ] [dash pattern={on 4.5pt off 4.5pt}]  (206.14,132.36) -- (282.51,141.35) ;
\draw [shift={(284.5,141.58)}, rotate = 186.72] [color={rgb, 255:red, 208; green, 2; blue, 27 }  ,draw opacity=1 ][line width=0.75]    (10.93,-3.29) .. controls (6.95,-1.4) and (3.31,-0.3) .. (0,0) .. controls (3.31,0.3) and (6.95,1.4) .. (10.93,3.29)   ;

\draw (223,7.73) node [anchor=north west][inner sep=0.75pt]    {$\ell _{1}$};
\draw (185,82.73) node [anchor=north west][inner sep=0.75pt]    {$\ell _{2}$};
\draw (182,153.73) node [anchor=north west][inner sep=0.75pt]    {$\ell _{3}$};
\draw (233,237.73) node [anchor=north west][inner sep=0.75pt]    {$\ell _{4}$};
\draw (278,9.73) node [anchor=north west][inner sep=0.75pt]  [color={rgb, 255:red, 208; green, 2; blue, 27 }  ,opacity=1 ]  {$\ell $};
\draw (282,53.73) node [anchor=north west][inner sep=0.75pt]  [color={rgb, 255:red, 208; green, 2; blue, 27 }  ,opacity=1 ]  {$p\ =\ \lambda a+\mu b$};
\draw (438,170.4) node [anchor=north west][inner sep=0.75pt]  [color={rgb, 255:red, 74; green, 144; blue, 226 }  ,opacity=1 ]  {$V\ =\ p\lor \ell _{2}$};
\draw (233.67,28.4) node [anchor=north west][inner sep=0.75pt]    {$a$};
\draw (394.67,219.4) node [anchor=north west][inner sep=0.75pt]    {$b$};
\draw (102.67,120.4) node [anchor=north west][inner sep=0.75pt]  [color={rgb, 255:red, 208; green, 2; blue, 27 }  ,opacity=1 ]  {$q\ =\ V\ \land \ell _{3}$};
\draw (278,272.4) node [anchor=north west][inner sep=0.75pt]  [color={rgb, 255:red, 208; green, 2; blue, 27 }  ,opacity=1 ]  {$\ell \ =\ p\lor q$};

\end{tikzpicture}
    \centering 
    \caption{Construction of the transversal $\ell$ using the Grassmann-Cayley algebra.}\label{fig:transversals}
\end{figure}


The line $\ell$ intersects the remaining line $\ell_4$ if and only if the expression
\begin{lstlisting}[language = Macaulay2]
formula = ell * ell4
\end{lstlisting}
vanishes. We can compute it as the following \Brackets expression:
\begin{lstlisting}[language = Macaulay2]
i9 : formula

o9 = m^2*[bdef]*[bcgh]-2*m^2*[bdgh]*[bcef]-2*l*m*[bcef]*[adgh]+l*m*[bcgh]*[adef]+l*m*[bdef]*[acgh]+l^2*[adef]*[acgh]-2*l*m*[bdgh]*[acef]-2*l^2*[adgh]*[acef]

o9 : GCExpression
\end{lstlisting}
Note that we can view the above as a quadratic equation in the variables $\lambda,\mu$, with $\lambda+\mu = 1$. The number of transversals of the four lines $\ell_1, \ldots, \ell_4$ is exactly the number of roots of this quadratic. To determine the number of roots, we compute the discriminant and so must extract the coefficients of $\lambda^2, \lambda\mu, \mu^2$. This is done as follows: 

\begin{lstlisting}[language = Macaulay2]
i10 : (m, c) = coefficients formula;
i11 : disc = c_(2,0) * c_(0,0) - 4 * c_(1,0)

o11 = [bdef]*[bcgh]*[adef]*[acgh]-2*[bdgh]*[bcef]*[adef]*[acgh]-2*[bdef]*[bcgh]*[adgh]*[acef]+4*[bdgh]*[bcef]*[adgh]*[acef]+8*[bcef]*[adgh]-4*[bcgh]*[adef]-4*[bdef]*[acgh]+8*[bdgh]*[acef]

o11 : GCExpression
\end{lstlisting}
The discriminant is generically non-zero, which implies that there are two transversals to four general lines in $\bP^3$. If the discriminant is equal to zero, there is only one transversal. This occurs when the following bracket polynomial vanishies:

\begin{lstlisting}[language = Macaulay2]
[bdef]*[bcgh]*[adef]*[acgh]-2*[bdgh]*[bcef]*[adef]*[acgh]-2*[bdef]*[bcgh]*[adgh]*[acef]+4*[bdgh]*[bcef]*[adgh]*[acef]+8*[bcef]*[adgh]-4*[bcgh]*[adef]-4*[bdef]*[acgh]+8*[bdgh]*[acef]
\end{lstlisting}
Finally, if each coefficient in front of $\lambda$ and $\mu$ is zero, there are infinitely many transversals. This occurs when the following \emph{three} bracket polynomials vanish (namely, the entries of \texttt{c} above.)

\section*{Acknowledgments}

This project began during a reading group on invariant theory at the University of Washington in 2022. We thank the other participants in this group.
Work continued at the 2023 Macaulay2 workshop in Minneapolis, funded by NSF DMS 2302476.
We thank the workshop organizers and participants, particularly Thomas Yahl who provided helpful input on the development of this package. 
Duff acknowledges partial support from an NSF Mathematical Sciences Postdoctoral Fellowship (DMS 2103310.)

\printbibliography
\par\vspace{\baselineskip}\noindent\small
\begin{tabular}{@{}l}
\scshape\ Department of Mathematics and Statistics, Auburn University\\\textit{E-mail address:} \url{deb0036@auburn.edu}\\
\scshape\ Department of Mathematics, University of Missouri - Columbia\\\textit{E-mail address: }\url{tduff@missouri.edu}\\
\scshape\ Department of Mathematics, University of Washington\\\textit{E-mail address:} \url{jackgk@uw.edu} \\
\scshape\ Department of Mathematics, University of Washington\\\textit{E-mail address:} \url{zengrf@uw.edu}
\end{tabular}
\end{document}